\begin{document}
\title[Ground state solutions for non-autonomous fractional Choquard equations]
{Ground state solutions for non-autonomous fractional Choquard equations}
\author{Yan-Hong Chen}
\address{School of Mathematical Science, Nankai University, Tianjin 300071, P.R. China}
\email{cyh1801@163.com}
\author{Chungen Liu}
\address{School of Mathematical Science and LPMC, Nankai University, Tianjin 300071, P.R. China}
\email{liucg@nankai.edu.cn}
\newcommand{\optional}[1]{\relax}
\setcounter{secnumdepth}{3}
\setcounter{section}{0} \setcounter{equation}{0}
\numberwithin{equation}{section}
\newcommand{\MLversion}{1.1}
\thanks{The second author was partially supported by NFSC (11071127, 10621101), the 973 Program of STM of China (2011CB808002) and SRFDP}
\keywords{Stationary Chaquard equation, Stationary nonlinear Schr\"{o}dinger-Newton equation, Stationary Hartree equation, Riesz Potential, Concentration Compactness, Fractional Laplacian}
\begin{abstract}
We consider the following nonlinear fractional Choquard equation,
\begin{equation}\label{e:introduction}
\begin{cases}
(-\Delta)^{s} u + u = (1 + a(x))(I_\alpha \ast (|u|^{p}))|u|^{p - 2}u\quad\text{ in }\mathbb{R}^N,\\
u(x)\to 0\quad\text{ as }|x|\to \infty,
\end{cases}
\end{equation}
here $s\in (0, 1)$, $\alpha\in (0, N)$, $p\in [2, \infty)$ and $\frac{N - 2s}{N + \alpha} < \frac{1}{p} < \frac{N}{N + \alpha}$. Assume $\lim_{|x|\to\infty}a(x) = 0$
and satisfying suitable assumptions but not requiring any symmetry property on $a(x)$, we prove the existence of ground state solutions for (\ref{e:introduction}).
\end{abstract}
\maketitle
\section{Introduction}
The nonlinear Choquard or Choquard-Pekar equations are of form
\begin{equation}\label{333e:main0}
\begin{cases}
- \Delta u + u = (I_\alpha \ast |u|^{p})|u|^{p - 2}u\quad\text{ in }\mathbb{R}^N,\\
u(x)\to 0\quad\text{ as }|x|\to \infty.
\end{cases}
\end{equation}
Here $\alpha\in (0, N)$, $p\in (1, \infty)$, $I_\alpha:\mathbb{R}^N\to \mathbb{R}$ is the Riesz potential defined by
\begin{equation}\label{333e:rieszpotential}
I_\alpha(x) = \frac{\Gamma(\frac{N - \alpha}{2})}{\Gamma(\frac{\alpha}{2})\pi^{N/2}2^\alpha|x|^{N - \alpha}},
\end{equation}
and $\Gamma$ is the Gamma function, see \cite{MRieszActaMath}. It is well known that if $u$ solves $(\ref{333e:main0})$ when $\alpha = 2$, $N\geq 3$, then $(u, v) = (u, I_\alpha \ast |u|^p)$ satisfies the system
\begin{equation*}
\begin{cases} -\Delta u + u =  v|u|^{p - 2}u\quad\text{in } \mathbb{R}^N,\\
-\Delta v =  |u|^p\quad\text{in } \mathbb{R}^N,\\
u(x)\to 0\quad\text{as } |x|\to \infty,\\
v(x)\to 0\quad\text{as } |x|\to\infty.
\end{cases}
\end{equation*}

(\ref{333e:main0}) has several physical origins. In the case $N = 3$, $p = 2$ and $\alpha = 2$, the problem
\begin{equation}\label{333e:phyiscal}
\begin{cases}
- \Delta u + u = (I_2 \ast |u|^{2})u\quad\text{ in }\mathbb{R}^3,\\
u(x)\to 0\quad\text{ as }|x|\to \infty
\end{cases}
\end{equation}
appeared in \cite{Pekar1954} by Pekar when he described the quantum mechanics of a polaron. In the approximation to Hartree-Fock theory of one component plasma, Choquard used (\ref{333e:phyiscal}) to describe an electron trapped in its own hole, see  \cite{LiebElliot}. In \cite{MorozPenrose}, Penrose proposed (\ref{333e:phyiscal}) as a model of self-gravitating matter in which quantum state reduction was understood as a gravitational phenomenon. Equations of type (\ref{333e:main0}) are usually called the Schr\"{o}dinger-Newton equation. If $u$ solves (\ref{333e:main0}), then the function $\psi$ defined by $\psi(t, x) = e^{it}u(x)$ is a solitary wave solution of the focusing time-dependent Hartree equation
$$
i\psi_t = -\Delta \psi - (I_\alpha\ast |\psi|^p)|\psi|^{p - 2}\psi\text{ in }\mathbb{R}_+ \times \mathbb{R}^N.
$$
So (\ref{333e:main0}) is also known as the stationary nonlinear Hartree equation.

In \cite{LiebElliot}, Lieb proved that the ground state of (\ref{333e:phyiscal}) is radial and unique up to translations; later, in \cite{LionsPLCERQNA1980}, Lions proved the existence of infinitely many radially symmetric solutions to (\ref{333e:phyiscal}); in \cite{WeiWinter2009}, Wei and Winter showed the nondegeneracy of the ground state and studied the multi-bump solutions for (\ref{333e:phyiscal}); in \cite{MaLiCPSSNCEARMA2010}, Ma and Zhao proved, under some assumptions on $N$, $\alpha$ and $p$, that every positive solution of (\ref{333e:main0}) is radially symmetric and monotone decreasing about some point by the method moving planes in an integral form developed in \cite{ChenWenxiongLicongmingOubiaoCPAM2006}; in \cite{CingolaniClappSecchiZAMP}, Cingolani, Clapp and Secchi proved some existence and multiplicity results, and established the regularity and some decay asymptotics at infinity of the ground states for (\ref{333e:main0}) in the electromagnetic case. In \cite{MorozSchaftingen2013}, Moroz and Schaftingen considered problem (\ref{333e:main0}), they eliminated the restriction of \cite{MaLiCPSSNCEARMA2010},  proved  the regularity, positivity and radial symmetry of the ground states for optimal range of parameters, the decay asymptotics at infinity of the ground states were also derived. In \cite{MorozSchaftingen2013JDE}, Moroz and Schaftingen showed that for some values of the parameters, (\ref{333e:main0}) does not have nontrivial nonnegative super solutions in exterior domains. In \cite{MorozSchaftingenpreprint}, Moroz and Schaftingen proved the existence of ground state solutions to the nonlinear Choquard equation
\begin{equation}
\begin{cases}
- \Delta u + u = (I_\alpha \ast F(u))F'(u)\quad\text{ in }\mathbb{R}^N,\\
u(x)\to 0\quad\text{ as }|x|\to \infty
\end{cases}
\end{equation}
under general conditions on the nonlinearity $F(u)$ in the spirit of Berestycki and Lions in \cite{BerestyckiLionsARMA1983}. In \cite{ClappSalazar2013}, Clapp and Salazar considered the following equation in exterior domains,
\begin{equation}\label{333e:main3}
- \Delta u + W(x)u = (I_\alpha \ast |u|^p)|u|^{p - 2}u\quad  u\in H^1_0(\Omega).
\end{equation}
They established the existence of a positive solution and multiple sign changing solutions for (\ref{333e:main3}). Recently, Moroz and Schaftingen studied the following equation
\begin{equation*}\label{33333e:main3}
- \varepsilon^2\Delta u + V(x)u = \varepsilon^{-\alpha}(I_\alpha \ast |u|^p)|u|^{p - 2}u\quad  \text{in  } \mathbb{R}^n
\end{equation*}
and proved the existence of semi-classical solutions, see \cite{MorozVanSchaftingenCVPDE2015}. In \cite{AveniaSicilianoSquassinaMMMAS2015}, d'Aenia, Siciliano and Squassina obtained regularity, existence, nonexistence, symmetry and decay properties of solutions for the fractional Choquard equation
\begin{equation*}\label{33333e:main3}
(- \Delta)^s u + \omega u = (I_\alpha \ast |u|^{p})|u|^{p - 2}u\quad\text{ in }\mathbb{R}^N.
\end{equation*}

In this paper, we study the following non-autonomous nonlinear fractional Choquard equation
\begin{equation}\label{333e:main}
\begin{cases}
(- \Delta)^s u + u = (1 + a(x))(I_\alpha \ast |u|^{p})|u|^{p - 2}u\quad\text{ in }\mathbb{R}^N,\\
u(x)\to 0\quad\text{ as }|x|\to \infty.
\end{cases}
\end{equation}
Here we assume that $s\in (0, 1)$, $\alpha\in (0, N)$, $p\in [2, \infty)$, $\frac{N - 2s}{N + \alpha} < \frac{1}{p} < \frac{N}{N + \alpha}$, $a = a(x)$ is a scalar function and satisfies the following conditions:
\begin{enumerate}
\item[(a1)]$a(x)\in L^\infty(\mathbb{R}^N)$, $\lim_{|x|\to +\infty}a(x) = 0$;
\item[(a2)]$a(x)\geq 0$, $a(x) > 0$ on a positive measure set and $a(x)\in L^{\frac{2N}{N - Np + 2sp + \alpha}}(\mathbb{R}^N)$.
\end{enumerate}
Our main theorem is
\begin{theorem}\label{333t:theorem1}
Assume $s\in (0, 1)$, $\alpha\in (0, N)$, $p\in [2, \infty)$. If $\frac{N - 2s}{N + \alpha} < \frac{1}{p} < \frac{N}{N + \alpha}$, $a(x)$ satisfies conditions (a1) and (a2), then problem (\ref{333e:main}) has at least one positive ground state solution.
\end{theorem}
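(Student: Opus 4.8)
\medskip

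\noindent The plan is to attack $(\ref{333e:main})$ variationally in $H^s(\mathbb{R}^N)$ and to restore the compactness that is lost on the whole space by comparing the mountain pass level of $(\ref{333e:main})$ with that of the autonomous limit problem $(-\Delta)^s u+u=(I_\alpha\ast|u|^p)|u|^{p-2}u$, using the sign hypotheses (a2). On $H^s(\mathbb{R}^N)$ with $\|u\|^2=\int_{\mathbb{R}^N}|(-\Delta)^{s/2}u|^2\,dx+\int_{\mathbb{R}^N}u^2\,dx$ I introduce
\begin{equation*}
J(u)=\frac12\|u\|^2-\frac{1}{2p}\int_{\mathbb{R}^N}(1+a(x))(I_\alpha\ast|u|^p)|u|^p\,dx .
\end{equation*}
The restriction $\frac{N-2s}{N+\alpha}<\frac1p<\frac{N}{N+\alpha}$ is exactly $\frac{2Np}{N+\alpha}\in\bigl(2,\tfrac{2N}{N-2s}\bigr)$, so by the Hardy--Littlewood--Sobolev inequality and the fractional Sobolev embedding the Choquard term is finite and $J\in C^1(H^s(\mathbb{R}^N),\mathbb{R})$; the exponent $\frac{2N}{N-Np+2sp+\alpha}$ in (a2) is precisely the one for which $\int_{\mathbb{R}^N}a(x)(I_\alpha\ast|u|^p)|u|^p\,dx\le C\|a\|_{L^{2N/(N-Np+2sp+\alpha)}}\|u\|^{2p}$, and together with (a1) it makes $u\mapsto\int_{\mathbb{R}^N}a(x)(I_\alpha\ast|u|^p)|u|^p\,dx$ weakly continuous on $H^s(\mathbb{R}^N)$ (at the level of $J$ and of $J'$). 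Since $2p>2$, $J$ has the mountain pass geometry and every $u\ne0$ has a unique projection $t_uu$ onto the Nehari manifold $\mathcal N=\{u\in H^s(\mathbb{R}^N)\setminus\{0\}:\langle J'(u),u\rangle=0\}$ with $J(t_uu)=\max_{t>0}J(tu)$; set $c=\inf_{\mathcal N}J$, which is positive and equals the mountain pass value, and define $\mathcal N_\infty$, $c_\infty$ analogously for $J_\infty$.

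The decisive point is the strict inequality $c<c_\infty$. Let $w$ be a positive ground state of the autonomous equation — its existence and positivity are available from the analysis of d'Avenia, Siciliano and Squassina recalled in the introduction — so that $w\in\mathcal N_\infty$ and $(I_\alpha\ast|w|^p)|w|^p>0$ in $\mathbb{R}^N$. The projection $tw$ of $w$ onto $\mathcal N$ is determined by
\begin{equation*}
t^{2p-2}=\frac{\int_{\mathbb{R}^N}(I_\alpha\ast|w|^p)|w|^p\,dx}{\int_{\mathbb{R}^N}(1+a(x))(I_\alpha\ast|w|^p)|w|^p\,dx},
\end{equation*}
and (a2) ($a\ge0$, $a>0$ on a set of positive measure) forces $t<1$; hence, using that on a Nehari manifold the energy equals $\bigl(\tfrac12-\tfrac1{2p}\bigr)\|\cdot\|^2$,
\begin{equation*}
c\le J(tw)=\Bigl(\tfrac12-\tfrac1{2p}\Bigr)t^2\|w\|^2<\Bigl(\tfrac12-\tfrac1{2p}\Bigr)\|w\|^2=J_\infty(w)=c_\infty .
\end{equation*}

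Then I carry out the concentration--compactness analysis. A Palais--Smale sequence $(u_n)$ for $J$ at level $c$ — furnished by the mountain pass theorem, or by Ekeland's principle on $\mathcal N$ — is bounded, so up to a subsequence $u_n\rightharpoonup u$; using the weak continuity of the perturbation term and a Brezis--Lieb-type splitting for the Choquard term, $u$ is a critical point of $J$ and $v_n:=u_n-u$ is a Palais--Smale sequence for $J_\infty$ at level $c-J(u)$. If $v_n\not\to0$ in $H^s(\mathbb{R}^N)$, a Lions-type vanishing lemma adapted to the nonlocal term excludes vanishing, so after a translation $v_n(\cdot+y_n)\rightharpoonup\tilde v\ne0$, a nontrivial critical point of $J_\infty$; then $c_\infty\le J_\infty(\tilde v)\le c-J(u)\le c$, since $J(u)=\bigl(\tfrac12-\tfrac1{2p}\bigr)\|u\|^2\ge0$, contradicting $c<c_\infty$. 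Hence $v_n\to0$, i.e.\ $u_n\to u$ strongly; as $c>0$ this gives $u\ne0$ and $J(u)=c$, so $u$ is a ground state solution of $(\ref{333e:main})$. Finally, replacing $u$ by $|u|$ does not increase $\|\cdot\|$ (the Gagliardo seminorm is nonincreasing under taking absolute values) and leaves the Choquard term unchanged, so the ground state may be chosen nonnegative, and the regularity theory for $(-\Delta)^s$ together with the strong maximum principle yields $u>0$ in $\mathbb{R}^N$.

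I expect the main obstacle to be the loss of compactness, concretely the splitting step: one must prove a Brezis--Lieb-type identity and a vanishing lemma tailored to the nonlocal nonlinearity $\int_{\mathbb{R}^N}(1+a)(I_\alpha\ast|u|^p)|u|^p\,dx$, and it is exactly the strict inequality $c<c_\infty$ — which itself rests on the sign assumptions (a2) and on the positivity of the autonomous ground state — that forbids a concentrating profile from escaping to infinity. Hypothesis (a1) is what makes the perturbed equation asymptotically autonomous, which is what lets one identify $J_\infty$ as the relevant limit functional.
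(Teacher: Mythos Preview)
Your proposal is correct and follows essentially the same route as the paper: Nehari constraint, the strict inequality $c<c_\infty$ obtained by projecting the autonomous ground state $w$ onto $\mathcal N$ (with $t<1$ forced by (a2)), a profile-decomposition/Brezis--Lieb splitting for the Choquard term that reduces a PS sequence at level $c$ to $\bar u$ plus translated critical points of $J_\infty$, and the positivity argument via $|u|$ and the fractional maximum principle. The only cosmetic difference is that the paper takes absolute values at the level of the minimizing sequence rather than of the minimizer, but your version works equally well after projecting $|u|$ back onto $\mathcal N$.
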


\begin{remark}
If $s = 1$, Theorem \ref{333t:theorem1} was proved by P. L. Lions, see \cite{LionsPLCERQNA1980}. In \cite{ZhangZhengjieTassiloHuailianXiaHongqiang}, the authors gave an extension of Lions's result by a Min-Max method argument (also in the case $s = 1$).
\end{remark}

(\ref{333e:main}) has a variational structure: critical points of the functional $E_{\alpha, p}\in C^1(H^{s}(\mathbb{R}^N)\cap L^{\frac{2Np}{N + \alpha}}(\mathbb{R}^N);\mathbb{R})$ defined by
\begin{equation*}\label{333e:functional}
E_{\alpha, p}(u) = \frac{1}{2}\displaystyle\int_{\mathbb{R}^N}|(-\Delta)^{\frac{s}{2}} u|^2 + |u|^2dx - \frac{1}{2p}\int_{\mathbb{R}^N}(1 + a(x))(I_{\alpha}\ast |u|^p)|u|^pdx
\end{equation*}
are weak solutions of (\ref{333e:main}). This functional is well defined by the Hardy-Littlewood-Sobolev inequality which states that if $t\in (1, \frac{N}{\alpha})$, then for every $v\in L^t(\mathbb{R}^N)$, $I_\alpha \ast v\in L^{\frac{Nt}{N - \alpha t}}(\mathbb{R}^N)$ and
\begin{equation*}
\displaystyle\int_{\mathbb{R}^N}|I_\alpha \ast v|^{\frac{Nt}{N - \alpha t}}dx\leq C\left(\int_{\mathbb{R}^N}|v|^tdx\right)^{\frac{N}{N - \alpha t}},
\end{equation*}
where $C > 0$ depends only on $\alpha$, $N$ and $t$. Note also that by the Sobolev embedding, $H^{s}(\mathbb{R}^N)\hookrightarrow L^{\frac{2Np}{N + \alpha}}(\mathbb{R}^N)$ if and only if $\frac{N - 2s}{N + \alpha} \leq \frac{1}{p} \leq \frac{N}{N + \alpha}$.

To prove Theorem \ref{333t:theorem1}, we use the idea of \cite{CeramiVairaJDE2010} which studied the positive solutions for some non-autonomous Schr\"{o}dinger-Poisson systems. Similar ideas were also used in \cite{AmbrosettiCeramiRuizSLCSSNAE2008}. The rest of this paper is organized as follows. In Section 2, we  study some properties of $E_{\alpha, p}$ under a natural constraint, the Nehari manifold. In Section 3, a crucial compactness theorem by the concentration compactness argument will be given. In Section 4, we prove Theorem \ref{333t:theorem1}.

\section{Preliminaries and Variational setting}
For $s\in (0, 1)$ and $N\geq 2$, the fractional Sobolev space $H^s(\mathbb{R}^N)$ can be defined by
\begin{equation*}
H^s(\mathbb{R}^N) = \displaystyle\left\{u\in L^2(\mathbb{R}^N): \frac{|u(x) - u(y)|}{|x -y|^{N/2 + s}}\in L^2(\mathbb{R}^N\times\mathbb{R}^N)\right\},
\end{equation*}
which is endowed with the norm
\begin{equation*}
\|u\|_{s} := \displaystyle\left(\int_{\mathbb{R}^N}|u|^2dx + \int_{\mathbb{R}^N}\int_{\mathbb{R}^N}\frac{|u(x) - u(y)|^2}{|x -y|^{N + 2s}}dxdy\right)^{\frac{1}{2}}.
\end{equation*}
The Gagliardo semi-norm of $u$ is defined by
\begin{equation*}
[u]_{H^s(\mathbb{R}^N)}  := \displaystyle\left(\int_{\mathbb{R}^N}\int_{\mathbb{R}^N}\frac{|u(x) - u(y)|^2}{|x -y|^{N + 2s}}dxdy\right)^{\frac{1}{2}}.
\end{equation*}
Let $\mathcal{S}$ be the Schwartz space of rapidly decaying smooth functions on $\mathbb{R}^{N}$ and the topology of $\mathcal{S}$ is generated by
$$
p_{m}(\varphi) = \displaystyle\sup_{x\in\mathbb{R}^{N}}(1 + |x|)^{m}\sum_{|\gamma|\leq m}|D^{\gamma}\varphi(x)|,\quad m = 0, 1, 2,\cdot\cdot\cdot,
$$
where $\varphi\in \mathcal{S}$. Denote the topological dual of $\mathcal{S}$ by $\mathcal{S}'$, then for any $\varphi\in \mathcal{S}$, the usual Fourier transformation of $\varphi$ is given by
$$
\mathcal{F}\varphi(\xi) = \frac{1}{(2\pi)^{N/2}}\int_{\mathbb{R}^{N}}e^{-i\xi\cdot x}\varphi(x)dx
$$
and one can extend $\mathcal{F}$ from $\mathcal{S}$ to $\mathcal{S}'$. Furthermore, it holds that
$$
[u]_s = C\left(\int_{\mathbb{R}^N}|\xi|^{2s}|\mathcal{F}u(\xi)|^2d\xi\right)^{\frac{1}{2}} = C\|(-\Delta)^{\frac{s}{2}}u\|_{L^2(\mathbb{R}^N)}
$$
for a suitable positive constant $C=C(N, \gamma)$. Hence we have
$$
\|u\|_s = \displaystyle\left(\int_{\mathbb{R}^N}|u|^2dx + \int_{\mathbb{R}^N}|(-\Delta)^{\frac{s}{2}}u|^2dx\right)^{\frac{1}{2}}.
$$
From the fractional Sobolev embedding theorem, $H^{s}(\mathbb R^N)$ embeds continuously into $L^q(\mathbb R^N)$ for all $q\in [2,\frac{2N}{N - 2s}]$ and compactly into $L_{{\rm loc}}^q(\mathbb R^N)$ for all $q\in [2,\frac{2N}{N - 2s})$, see \cite{DiNezzaPalatucciValdinoci2012}.

The functional $E_{\alpha, p}$ is bounded neither from below nor from above. So it is not convenient to consider $E_{\alpha, p}$ restricted to a natural constraint, the Nehari manifold, that contains the critical points of $E_{\alpha, p}$ and on which $E_{\alpha, p}$ turns out to be bounded from below. Define
\begin{equation}\label{333e:Nehari1}
\mathcal{N}:=\{u\in H^s(\mathbb{R}^N)\setminus\{0\}: G(u) = 0\},
\end{equation}
here
\begin{equation}\label{333e:Nehari2}
G(u) = E_{\alpha, p}'(u)[u] = \|u\|^2_s - \int_{\mathbb{R}^N}(1 + a(x))(I_{\alpha}\ast |u|^p)|u|^pdx.
\end{equation}
Note that
\begin{equation}\label{333e:Nehari3}
E_{\alpha, p}|_{\mathcal{N}}(u) = (\frac{1}{2} - \frac{1}{2p})\|u\|^2_s.
\end{equation}
\begin{lemma}\label{333l:nehari}
\begin{enumerate}
\item[(1)]$\mathcal{N}$ is a $C^1$ regular manifold which is diffeomorphic to the standard sphere of $W^{1, 2}(\mathbb{R}^N)$;
\item[(2)]$E_{\alpha, p}$ is bounded from below by a positive constant on $\mathcal{N}$;
\item[(3)]$u$ is a nonzero free critical point of $E_{\alpha,p}$ if and only if $u$ is a critical point of $E_{\alpha,p}$ constrained on $\mathcal{N}$.
\end{enumerate}
\end{lemma}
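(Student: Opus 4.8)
The plan is to establish the three assertions separately, the main computational ingredient being the scaling of $G$ along the rays $t\mapsto tu$ combined with the Hardy--Littlewood--Sobolev and fractional Sobolev inequalities recalled above. \emph{Regularity and the diffeomorphism.} Since $E_{\alpha,p}\in C^1$, the function $G$ in \eqref{333e:Nehari2} is $C^1$, and for $u\in\mathcal{N}$ a direct differentiation together with $G(u)=0$ gives
\[
G'(u)[u]=2\|u\|_s^2-2p\int_{\mathbb{R}^N}(1+a(x))(I_\alpha\ast|u|^p)|u|^p\,dx=(2-2p)\|u\|_s^2<0,
\]
because $p\ge 2>1$ and $\|u\|_s\neq 0$. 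Hence $0$ is a regular value of $G$ on $H^s(\mathbb{R}^N)\setminus\{0\}$ and $\mathcal{N}$ is a nonempty $C^1$ submanifold. For the identification with the unit sphere $S=\{w\in H^s(\mathbb{R}^N):\|w\|_s=1\}$, note that for every $u\neq 0$ the number $D(u):=\int_{\mathbb{R}^N}(1+a(x))(I_\alpha\ast|u|^p)|u|^p\,dx$ is strictly positive (here $a\ge 0$, $I_\alpha>0$ pointwise, and $|u|^p\ge 0$ is not identically zero), while $G(tu)=t^2\|u\|_s^2-t^{2p}D(u)$, $t>0$, vanishes at exactly one point $t(u)=\big(\|u\|_s^2/D(u)\big)^{1/(2p-2)}$. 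Since $D$ is $C^1$ and strictly positive on $H^s(\mathbb{R}^N)\setminus\{0\}$, the map $w\mapsto m(w):=t(w)w$ is a $C^1$ diffeomorphism from $S$ onto $\mathcal{N}$, with $C^1$ inverse $u\mapsto u/\|u\|_s$.

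\emph{Lower bound.} By \eqref{333e:Nehari3} it suffices to bound $\|u\|_s$ away from $0$ on $\mathcal{N}$. From $G(u)=0$, the Hardy--Littlewood--Sobolev inequality (applicable since $\frac{2N}{N+\alpha}\in(1,N/\alpha)$) applied to $|u|^p$ and the embedding $H^s(\mathbb{R}^N)\hookrightarrow L^{2Np/(N+\alpha)}(\mathbb{R}^N)$, together with $a\in L^\infty$, yield
\[
\|u\|_s^2=\int_{\mathbb{R}^N}(1+a(x))(I_\alpha\ast|u|^p)|u|^p\,dx\le(1+\|a\|_\infty)\,C\,\|u\|_{L^{2Np/(N+\alpha)}}^{2p}\le C'\|u\|_s^{2p}.
\]
As $u\neq 0$ this gives $\|u\|_s^{2p-2}\ge 1/C'$, hence $\|u\|_s^2\ge (C')^{-1/(p-1)}$ and, by \eqref{333e:Nehari3}, $E_{\alpha,p}|_{\mathcal{N}}\ge(\tfrac12-\tfrac1{2p})(C')^{-1/(p-1)}>0$.

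\emph{Critical points.} If $u\neq 0$ and $E_{\alpha,p}'(u)=0$, then $G(u)=E_{\alpha,p}'(u)[u]=0$, so $u\in\mathcal{N}$ and $u$ is trivially a critical point of $E_{\alpha,p}|_{\mathcal{N}}$. Conversely, if $u\in\mathcal{N}$ is a critical point of the constrained functional, the Lagrange multiplier rule (legitimate since $G'(u)\neq 0$) yields $\lambda\in\mathbb{R}$ with $E_{\alpha,p}'(u)=\lambda G'(u)$; evaluating at $u$ and using $E_{\alpha,p}'(u)[u]=G(u)=0$ and $G'(u)[u]=(2-2p)\|u\|_s^2\neq 0$ (as $\|u\|_s\neq 0$ by the lower bound), we get $\lambda=0$, hence $E_{\alpha,p}'(u)=0$. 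I do not anticipate a genuine obstacle here: the only delicate points are the strict positivity of $D(u)$ for $u\neq 0$, needed for existence and uniqueness of $t(u)$, and the admissibility of the exponent $\frac{2Np}{N+\alpha}$ in both the Hardy--Littlewood--Sobolev inequality and the Sobolev embedding, which is precisely what the hypothesis $\frac{N-2s}{N+\alpha}<\frac1p<\frac{N}{N+\alpha}$ secures.
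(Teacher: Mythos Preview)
Your proof is correct and follows essentially the same route as the paper's: the scaling computation $G'(u)[u]=(2-2p)\|u\|_s^2$ for regularity, the Hardy--Littlewood--Sobolev plus Sobolev embedding for the uniform lower bound on $\|u\|_s$, and the Lagrange multiplier argument for (3). One small slip: deducing that $G$ is $C^1$ requires $E_{\alpha,p}\in C^2$ (or a direct check), not merely $C^1$; the paper handles this by asserting $E_{\alpha,p}$ is $C^2$ and writing out $E''_{\alpha,p}$, which is justified here since $p\ge 2$.
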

\begin{proof}
(1) Let $u\in H^s(\mathbb{R}^N)\setminus\{0\}$ with $\|u\|_s = 1$. Then there exists an unique $t\in \mathbb{R}^+\setminus\{0\}$ such that $tu\in \mathcal{N}$. Indeed, such $t$ must satisfy
\begin{equation*}
0 = E_{\alpha, p}'(tu)[tu] = t^2\|u\|^2_s - |t|^{2p}\int_{\mathbb{R}^N}(1 + a(x))(I_{\alpha}\ast |u|^p)|u|^pdx.
\end{equation*}
Define
$$
A: = \int_{\mathbb{R}^N}(1 + a(x))(I_{\alpha}\ast |u|^p)|u|^pdx,
$$
then we are led to find a positive solution of equation $t^2(1 - At^{2p - 2}) = 0$ with $A > 0$. Since $p > 1$, the equation $1 - At^{2p - 2} = 0$ has an unique solution $t = t(u) > 0$. The corresponding point $t(u)u\in \mathcal{N}$, which is called the projection of $u$ on $\mathcal{N}$. Moreover,
$$
E_{\alpha, p}(t(u)u) = \max_{t > 0} E_{\alpha, p}(tu).
$$

Suppose $u\in \mathcal{N}$, then
\begin{equation*}
\left|\int_{\mathbb{R}^N}(1 + a(x))(I_{\alpha}\ast |u|^p)|u|^pdx\right|\leq C\left|\int_{\mathbb{R}^N}(I_{\alpha}\ast |u|^p)|u|^pdx\right|\leq C\|u\|^{2p}
\end{equation*}
and then
\begin{eqnarray*}
0 &=& \|u\|^2_s - \int_{\mathbb{R}^N}(1 + a(x))(I_{\alpha}\ast |u|^p)|u|^pdx\\
&\geq & \|u\|^2_s - C\|u\|^{2p}_s.
\end{eqnarray*}
From which we have
$$
\|u\|_s\geq C_1 > 0\quad u\in \mathcal{N}.
$$

Since $E_{\alpha, p}$ a $C^2$ functional and
\begin{eqnarray*}
E''_{\alpha, p}[v, w]& = &\langle v, w\rangle - (p - 1)\int_{\mathbb{R}^N}(1 + a(x))(I_{\alpha}\ast |u|^p)|u|^{p - 2}vwdx\\ &&- p\int_{\mathbb{R}^N}\int_{\mathbb{R}^N}\frac{|u|^{p - 2}uw}{|x - y|^{N - \alpha}}dy(1 + a(x))|u|^{p - 1}vdx,
\end{eqnarray*}
$G$ is a $C^1$ functional and
\begin{eqnarray*}
G'(u)[u]& = & E''_{\alpha, p}[u, u]\\
 &=&\|u\|^2_s - (2p - 1)\int_{\mathbb{R}^N}(1 + a(x))(I_{\alpha}\ast |u|^p)|u|^{p}dx\\
 & = &(2 - 2p)\|u\|^2_s\leq (2 - 2p)C < 0.
\end{eqnarray*}

(2)
From the above argument, we have
\begin{equation*}
E_{\alpha, p}|_{\mathcal{N}}(u) = (\frac{1}{2} - \frac{1}{2p})\|u\|^2_s > C > 0.
\end{equation*}

(3) Clearly, if $u\neq 0$ is a critical point of $E_{\alpha, p}$, then $E_{\alpha, p}'(u) = 0$ and $u\in \mathcal{N}$. On the other hand, let $u$ be a critical point of $E_{\alpha, p}$ constrained on $\mathcal{N}$, then there exists $\lambda\in \mathbb{R}$ such that $E_{\alpha, p}'(u) = \lambda G'(u)$. Therefore,
$$
0 = G(u) = E_{\alpha, p}'(u)[u] = \lambda G'(u)[u].
$$
Since $G'(u)[u] < 0$, $\lambda = 0$ and $E_{\alpha,p}'(u) = 0$.
\end{proof}

Setting
$$
m:=\inf\{E_{\alpha, p}(u): u\in\mathcal{N}\},
$$
as a consequence of Lemma \ref{333l:nehari}, $m$ is a positive number.

When $a(x) \equiv 0$, equation (\ref{333e:main}) becomes
\begin{equation}\label{333e:infinity}
(- \Delta)^s u + u = (I_\alpha \ast |u|^{p})|u|^{p - 2}u\quad\text{ in }\mathbb{R}^N.
\end{equation}
In this case, we use the notation $E_{\alpha, p}^{\infty}(u)$ and $\mathcal{N}_{\infty}$, respectively, for the functional and the natural constraint. Namely,
\begin{equation*}\label{333e:functionalinfty}
E^{\infty}_{\alpha, p}(u) = \frac{1}{2}\displaystyle\int_{\mathbb{R}^N}|(-\Delta)^{\frac{s}{2}} u|^2 + |u|^2dx - \frac{1}{2p}\int_{\mathbb{R}^N}(I_{\alpha}\ast |u|^p)|u|^pdx,
\end{equation*}
\begin{equation*}\label{333e:Nehari1}
\mathcal{N}_{\infty}:=\{u\in H^{s}(\mathbb{R}^N)\setminus\{0\}: \|u\|^2_s - \int_{\mathbb{R}^N}(I_{\alpha}\ast |u|^p)|u|^pdx = 0\}.
\end{equation*}

In the following lemma, we state some known results about the existence of positive solutions of (\ref{333e:infinity}) which are useful in the following proof.
\begin{lemma}(\cite{AveniaSicilianoSquassinaMMMAS2015})
Let $N\in \mathbb{N}^+$, $s\in (0, 1)$, $\alpha\in (0, N)$ and $p\in (1, \infty)$. Assume that $\frac{N - 2s}{N + \alpha} < \frac{1}{p} < \frac{N}{N + \alpha}$, then equation (\ref{333e:infinity}) has a positive, ground state solution $w\in H^{s}(\mathbb{R}^N)$ which is radially symmetric about the origin and decaying to zero as $|x|\to +\infty$.
\end{lemma}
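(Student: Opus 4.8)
The plan is to obtain $w$ as a suitably rescaled minimizer of a scale-invariant quotient, to recover the compactness lost to translation invariance by working in the radial subspace, and then to establish positivity, regularity, symmetry and decay in turn.

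\emph{Variational framework.} Consider the scale-invariant quotient
\[
Q(u)=\frac{\|u\|_s^2}{\Big(\int_{\mathbb R^N}(I_\alpha\ast|u|^p)|u|^p\,dx\Big)^{1/p}},\qquad u\in H^s(\mathbb R^N)\setminus\{0\},
\]
and set $T:=\inf_{u\neq0}Q(u)$. The admissibility condition $\frac{N-2s}{N+\alpha}<\frac1p<\frac{N}{N+\alpha}$ is precisely what forces $\frac{2Np}{N+\alpha}$ to lie strictly between $2$ and $\frac{2N}{N-2s}$, so $H^s(\mathbb R^N)\hookrightarrow L^{2Np/(N+\alpha)}(\mathbb R^N)$ and, by the Hardy--Littlewood--Sobolev inequality, $\int_{\mathbb R^N}(I_\alpha\ast|u|^p)|u|^p\,dx\le C\|u\|_s^{2p}$; hence $0<T<\infty$. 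Since $Q$ is constant along rays and, by the projection argument in the proof of Lemma~\ref{333l:nehari} (with $a\equiv0$), each ray meets $\mathcal N_\infty$ exactly once, a minimizer $u$ of $Q$ normalized by $\int_{\mathbb R^N}(I_\alpha\ast|u|^p)|u|^p\,dx=1$ satisfies the Euler--Lagrange equation up to a positive Lagrange multiplier, and an appropriate dilation of $u$ lies on $\mathcal N_\infty$, solves (\ref{333e:infinity}) and achieves $m_\infty$; thus it suffices to produce a minimizer of $Q$.

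\emph{Existence of a radial minimizer.} For $u\in H^s(\mathbb R^N)$ let $u^\ast$ be its Schwarz symmetrization. Then $\|u^\ast\|_{L^2}=\|u\|_{L^2}$, the fractional P\'olya--Szeg\H o inequality gives $[u^\ast]_s\le[u]_s$, and the Riesz rearrangement inequality (with the radial, decreasing kernel $I_\alpha$) gives $\int_{\mathbb R^N}(I_\alpha\ast|u^\ast|^p)|u^\ast|^p\,dx\ge\int_{\mathbb R^N}(I_\alpha\ast|u|^p)|u|^p\,dx$; hence $Q(u^\ast)\le Q(u)$, so $T$ is also the infimum of $Q$ over $H^s_{\mathrm{rad}}(\mathbb R^N)$. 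Take a minimizing sequence $(u_n)\subset H^s_{\mathrm{rad}}(\mathbb R^N)$ normalized by $\int_{\mathbb R^N}(I_\alpha\ast|u_n|^p)|u_n|^p\,dx=1$ and $\|u_n\|_s^2\to T$; being bounded in $H^s$, it has a subsequence with $u_n\rightharpoonup u$ in $H^s(\mathbb R^N)$. By the compact embedding $H^s_{\mathrm{rad}}(\mathbb R^N)\hookrightarrow\hookrightarrow L^q(\mathbb R^N)$ for $q\in(2,\frac{2N}{N-2s})$ (the fractional counterpart of Strauss' radial lemma) together with the Hardy--Littlewood--Sobolev inequality, $\int_{\mathbb R^N}(I_\alpha\ast|u_n|^p)|u_n|^p\,dx\to\int_{\mathbb R^N}(I_\alpha\ast|u|^p)|u|^p\,dx=1$, while weak lower semicontinuity gives $\|u\|_s^2\le T$; hence $u$ attains $T$. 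Replacing $u$ by $|u|$ (legitimate since $\||u|\|_s\le\|u\|_s$ and only $|u|$ enters $Q$), and then by $u^\ast$, we may assume the minimizer is nonnegative and radially nonincreasing.

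\emph{Positivity, regularity and decay.} Lagrange multipliers give $\mu>0$ with $(-\Delta)^su+u=\mu(I_\alpha\ast|u|^p)|u|^{p-2}u$; the rescaled function $w:=\mu^{1/(2p-2)}u$ solves (\ref{333e:infinity}), lies on $\mathcal N_\infty$ with $Q(w)=T$, hence has $E^\infty_{\alpha,p}(w)=m_\infty$ and is a ground state, and it inherits the radial, nonincreasing profile of $u$. If $w(x_0)=0$ for some $x_0$, the pointwise identity $(-\Delta)^sw(x_0)=-c_{N,s}\int_{\mathbb R^N}\frac{w(y)}{|x_0-y|^{N+2s}}\,dy<0$ contradicts (\ref{333e:infinity}), whose right-hand side vanishes at $x_0$ (as $p>1$), so $w>0$ on $\mathbb R^N$. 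A bootstrap based on the Hardy--Littlewood--Sobolev inequality and $L^q$ regularity for $(-\Delta)^s+1$ (or a Moser iteration on the equation) yields $w\in L^\infty(\mathbb R^N)\cap C^{0,\gamma}(\mathbb R^N)$; being radially nonincreasing and in $L^2(\mathbb R^N)$, $w(x)\to0$ as $|x|\to\infty$, with the rate $w(x)\lesssim|x|^{-(N+2s)}$ following from comparison with the Bessel kernel of $(-\Delta)^s+1$.

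The main obstacle is the second step: one must supply (or quote) the fractional P\'olya--Szeg\H o inequality and, above all, the global compact radial embedding, which is stronger than the local compact embedding $H^s(\mathbb R^N)\hookrightarrow\hookrightarrow L^q_{\mathrm{loc}}(\mathbb R^N)$, and check that the normalization survives passing to the weak limit. Alternatively one can minimize $Q$ over all of $H^s(\mathbb R^N)$ and invoke Lions' concentration--compactness principle: vanishing is ruled out by the fractional vanishing lemma together with Hardy--Littlewood--Sobolev, and dichotomy by the strict subadditivity of $\lambda\mapsto\lambda^{1/p}$ (valid since $p>1$), so that a minimizing sequence is relatively compact up to translations, and one symmetrizes the resulting ground state at the end.
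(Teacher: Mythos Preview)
The paper does not prove this lemma at all: it is stated with a citation to \cite{AveniaSicilianoSquassinaMMMAS2015} and used as a black box, so there is no ``paper's own proof'' to compare against. What you have written is, in effect, a sketch of the argument from that reference (minimize a scale--invariant quotient, recover compactness via Schwarz symmetrization and the fractional Strauss embedding, rescale to land on $\mathcal N_\infty$, then run regularity and a fractional maximum principle).

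Your outline is broadly sound, but two points deserve care. First, you invoke the pointwise identity for $(-\Delta)^s w(x_0)$ in the positivity step before establishing enough regularity to justify it; $C^{0,\gamma}$ alone does not suffice for the singular--integral representation, so either push the bootstrap further (to $C^{2s+\varepsilon}_{\mathrm{loc}}$, as in \cite{AveniaSicilianoSquassinaMMMAS2015}) or phrase positivity via the strong maximum principle for $(-\Delta)^s+1$ acting on nonnegative weak supersolutions. Second, you correctly flag the fractional P\'olya--Szeg\H o inequality and the compact radial embedding as the genuine analytic inputs; both are available in the literature, but they should be cited rather than asserted. With those two adjustments your argument would stand on its own, though for the purposes of this paper simply citing \cite{AveniaSicilianoSquassinaMMMAS2015} is all that is required.
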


Since $w$ is the ground state solution, setting
$$
m_{\infty}:=\inf\{E_{\alpha, p}^\infty(u): u\in \mathcal{N}_{\infty}\},
$$
we have $E_{\alpha, p}^\infty(v)\geq E_{\alpha, p}^\infty(w)$, for all $v$ solution of (\ref{333e:infinity}). Note also that
$$
m_{\infty} = E_{\alpha, p}^{\infty}(w) = (\frac{1}{2} - \frac{1}{2p})\|w\|^2_s.
$$

\section{A compactness lemma}
In this section, we study the compactness of  the Palais-Smale sequence of $E_{\alpha, p}$. We follow the ideas of \cite{CeramiVairaJDE2010}.
\begin{theorem}\label{333L:compact}
Let $\{u_n\}$ be a Palais-Smale sequence of $E_{\alpha, p}$ constrained on $\mathcal {N}$, that is to say, $u_n\in \mathcal{N}$ and
\begin{equation*}
E_{\alpha, p}(u_n) \text{ is bounded},\quad E'_{\alpha,p}|_{\mathcal{N}}(u_n)\to 0\text{ strongly in } H^s(\mathbb{R}^N).
\end{equation*}
Then, up to a subsequence, there exist a solution $\bar{u}$ of (\ref{333e:main}), a number $k\in\mathbb{N}\cup \{0\}$, $k$ functions $u^1,\cdot\cdot\cdot, u^k$ of $H^{s}(\mathbb{R}^N)$ and $k$ sequence of points $\{y_n^i\}$, $y_n^i\in \mathbb{R}^N$, $0\leq i\leq k$, such that
\begin{enumerate}
\item[(i)] $$|y_n^i|\to \infty, \quad |y_n^i-y_n^j|\to \infty,\quad \mbox{if} \quad i\ne j, \quad n\to \infty;$$
\item[(ii)]
\begin{equation*}
u_n-\sum_{i=1}^{k}u_n^i(\cdot - y_n^i) \to \bar{u}\text{ in }H^s(\mathbb{R}^N);
\end{equation*}
\item[(iii)]
$$E_{\alpha, p}(u_n)\to \sum_{i=1}^{k}E_{\alpha, p}^\infty(u^i) + E_{\alpha, p}(\bar{u});$$
\item[(iv)] $u^i$ are nontrivial weak solutions of (\ref{333e:infinity}).
\end{enumerate}
Here, we agree that in the case $k = 0$ the above holds without $u^i$s.
\end{theorem}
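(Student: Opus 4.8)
\emph{Proof plan.} The plan is to run the iterated bubbling (global compactness) scheme of Struwe and Benci--Cerami, in the nonlocal adaptation of \cite{CeramiVairaJDE2010}. \emph{Step 1: boundedness and reduction to a free Palais--Smale sequence.} By \eqref{333e:Nehari3} one has $E_{\alpha,p}(u_n) = (\tfrac12-\tfrac1{2p})\|u_n\|_s^2$, so the boundedness of $E_{\alpha,p}(u_n)$ forces $\{u_n\}$ bounded in $H^s(\mathbb R^N)$. Using the Lagrange multiplier rule on $\mathcal N$, write $E'_{\alpha,p}(u_n) = \lambda_n G'(u_n) + \sigma_n$ with $\sigma_n\to 0$ in the dual; testing with $u_n$, using $G(u_n)=E'_{\alpha,p}(u_n)[u_n]=0$ (as $u_n\in\mathcal N$) together with the uniform negativity $G'(u_n)[u_n]=(2-2p)\|u_n\|_s^2\le(2-2p)C_1^2<0$ from the proof of Lemma \ref{333l:nehari}, we get $\lambda_n\to 0$ and hence $E'_{\alpha,p}(u_n)\to 0$ in $H^{-s}(\mathbb R^N)$. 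Thus $\{u_n\}$ is a bounded free Palais--Smale sequence for $E_{\alpha,p}$.

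\emph{Step 2: the first limit $\bar u$.} Along a subsequence $u_n\rightharpoonup\bar u$ in $H^s$, $u_n\to\bar u$ in $L^q_{\mathrm{loc}}$ for $q\in[2,\tfrac{2N}{N-2s})$ and a.e. Passing to the limit in $E'_{\alpha,p}(u_n)[\varphi]=o(1)$, $\varphi\in C_c^\infty$, one checks $\bar u$ solves \eqref{333e:main}: local compactness handles the quadratic part, the Choquard term converges by the Hardy--Littlewood--Sobolev inequality (with exponent $\tfrac{2Np}{N+\alpha}\in(2,\tfrac{2N}{N-2s})$, legitimate by the strict inequalities on $1/p$) plus $L^{\mathrm{loc}}$-convergence, and the term carrying $a(x)$ is a compact perturbation since $a\in L^\infty$, $a(x)\to 0$ as $|x|\to\infty$ and $a\in L^{2N/(N-Np+2sp+\alpha)}$. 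Set $v_n^1:=u_n-\bar u\rightharpoonup 0$.

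\emph{Step 3: nonlocal Brezis--Lieb splitting and the dichotomy.} The technical core is a Brezis--Lieb-type lemma for the Choquard term: if $w_n\rightharpoonup 0$ in $H^s$, then
$$\int_{\mathbb R^N}(1+a)(I_\alpha\ast|\bar u+w_n|^p)|\bar u+w_n|^p = \int_{\mathbb R^N}(1+a)(I_\alpha\ast|\bar u|^p)|\bar u|^p + \int_{\mathbb R^N}(I_\alpha\ast|w_n|^p)|w_n|^p + o(1),$$
the $a$-contribution of $w_n$ vanishing because $a$ decays and $w_n\rightharpoonup 0$, and, simultaneously in $H^{-s}$,
$$E'_{\alpha,p}(\bar u+w_n) - E'_{\alpha,p}(\bar u) - (E^\infty_{\alpha,p})'(w_n)\to 0,$$
both proved by splitting $\mathbb R^N$ into a large ball and its complement and estimating the cross terms $(I_\alpha\ast(|\bar u|^{p-1}|w_n|))|w_n|^p$ and their analogues via Hardy--Littlewood--Sobolev, exactly as in the pure-power case. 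Applied with $w_n=v_n^1$ this gives $E_{\alpha,p}(u_n)=E_{\alpha,p}(\bar u)+E^\infty_{\alpha,p}(v_n^1)+o(1)$ and $(E^\infty_{\alpha,p})'(v_n^1)\to 0$. If $v_n^1\to 0$ in $H^s$ we stop with $k=0$. Otherwise non-vanishing must hold: were $\sup_{y}\int_{B_1(y)}|v_n^1|^2\to 0$, Lions' vanishing lemma would give $v_n^1\to 0$ in $L^{2Np/(N+\alpha)}$, hence $\int(I_\alpha\ast|v_n^1|^p)|v_n^1|^p\to 0$ by HLS, and then $(E^\infty_{\alpha,p})'(v_n^1)[v_n^1]=o(1)$ would force $\|v_n^1\|_s^2\to0$, a contradiction. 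So there are $y_n^1\in\mathbb R^N$ with $\int_{B_1(y_n^1)}|v_n^1|^2\ge\delta>0$; since $v_n^1\rightharpoonup 0$ necessarily $|y_n^1|\to\infty$, and, up to a subsequence, $v_n^1(\cdot+y_n^1)\rightharpoonup u^1\ne0$, which by translation invariance of $E^\infty_{\alpha,p}$ is a nontrivial weak solution of \eqref{333e:infinity}.

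\emph{Step 4: iteration, termination, conclusions, and the main obstacle.} Put $v_n^2:=v_n^1-u^1(\cdot-y_n^1)\rightharpoonup0$; the translation-invariant version of Step 3's splitting (now for $E^\infty_{\alpha,p}$) gives $E^\infty_{\alpha,p}(v_n^1)=E^\infty_{\alpha,p}(u^1)+E^\infty_{\alpha,p}(v_n^2)+o(1)$ and $(E^\infty_{\alpha,p})'(v_n^2)\to0$. Iterate: at each stage either $v_n^{i+1}\to0$ in $H^s$ and we stop with $k=i$, or we extract $y_n^{i+1}$, $|y_n^{i+1}|\to\infty$, and $u^{i+1}\ne0$ solving \eqref{333e:infinity}; the divergence $|y_n^i-y_n^j|\to\infty$ for $i\ne j$ is forced, since otherwise, after the relevant translation, the weak limit of $v_n^j$ would detect the bump $u^i$, contradicting $v_n^j\rightharpoonup0$. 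Every $u^i$ is a nontrivial solution of \eqref{333e:infinity}, so $E^\infty_{\alpha,p}(u^i)\ge m_\infty>0$; also $E_{\alpha,p}(\bar u)=(\tfrac12-\tfrac1{2p})\|\bar u\|_s^2\ge0$ and, from $(E^\infty_{\alpha,p})'(v_n^{k+1})[v_n^{k+1}]=o(1)$, $E^\infty_{\alpha,p}(v_n^{k+1})=(\tfrac12-\tfrac1{2p})\|v_n^{k+1}\|_s^2+o(1)\ge o(1)$. Summing all splittings gives $\sum_{i=1}^k m_\infty\le\liminf_n E_{\alpha,p}(u_n)<\infty$, so the process terminates after finitely many steps; telescoping yields (iii), while stopping means $v_n^{k+1}=u_n-\bar u-\sum_{i=1}^k u^i(\cdot-y_n^i)\to0$ in $H^s$, which is (ii), and (i), (iv) were obtained along the way. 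The main obstacle is precisely the nonlocal Brezis--Lieb lemma in its two forms (for the functional and for its $H^{-s}$-derivative): the double convolution makes the large-ball/complement estimates of the cross terms more delicate than in the local case, and one must invoke Hardy--Littlewood--Sobolev repeatedly with the exponent $\tfrac{2Np}{N+\alpha}$ to control them.
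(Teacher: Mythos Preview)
Your proof plan is correct and follows essentially the same global-compactness scheme as the paper: reduce to a free Palais--Smale sequence via the Lagrange multiplier argument, extract the weak limit $\bar u$, establish the nonlocal Brezis--Lieb splittings for both $E_{\alpha,p}$ and its derivative (the paper encodes these as the estimates (3.2)--(3.4) and invokes Lemma~4.3 of \cite{AveniaSicilianoSquassinaMMMAS2015} for the convolution splitting), then iterate with Lions' vanishing lemma and terminate because each bump contributes at least $m_\infty$. Your dichotomy argument in Step~3, deducing $\|v_n^1\|_s\to 0$ from $(E^\infty_{\alpha,p})'(v_n^1)[v_n^1]=o(1)$ when vanishing occurs, is in fact slightly cleaner than the paper's formulation, but the overall architecture is identical.
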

\begin{proof}
Since $E_{\alpha, p}(u_n)$ is bounded, from the fact that
\begin{equation*}
E_{\alpha, p}|_{\mathcal{N}}(u) = (\frac{1}{2} - \frac{1}{2p})\|u\|_s^2,
\end{equation*}
we have that $\{u_n\}_n$ is bounded, too.

Now, we claim that
$$
E'_{\alpha, p}(u_n) \to 0 \quad\text{ in } H^s(\mathbb{R}^N).
$$
In fact,we have
$$
o(1) = E'_{\alpha, p}|_{\mathcal{N}}(u_n) = E'_{\alpha, p}(u_n) - \lambda_n G'(u_n)
$$
for some $\lambda_n\in \mathbb{R}$. Taking the scalar product with $u_n$, we obtain
$$
o(1) = \langle E'_{\alpha, p}(u_n), u_n\rangle - \lambda_n \langle G'(u_n), u_n\rangle.
$$
Since $u_n\in \mathcal{N}$, $\langle E'_{\alpha, p}(u_n), u_n\rangle = 0$ and $\langle G'(u_n), u_n\rangle < C < 0$. Thus $\lambda_n\to 0$ for $n\to+\infty$. Moreover, by the boundedness of $\{u_n\}$, $G'(u_n)$ is bounded and this implies that $\lambda_n G'(u_n)\to 0$, so we have the assertion.

On the other hand, since $u_n$ is bounded in $H^s(\mathbb{R}^N)$, there exists $\bar{u}\in H^s(\mathbb{R}^N)$ such that, up to a subsequence,
$$
u_n\rightharpoonup \bar{u}\quad\text{ in }H^s(\mathbb{R}^N)\text{ and in } L^{\frac{2Np}{N + \alpha}}(\mathbb R^N),
$$
$$
u_n(x)\to \bar{u}(x) \text{ a.e. on }\mathbb{R}^N.
$$
Thus we easily duce that $E'_{\alpha, p}(\bar{u}) = 0$, that is to say, $\bar{u}$ is a weak solution of (\ref{333e:main}). Indeed, for any smooth function $h$ with compact support $\Omega\subseteq \mathbb{R}^N$,
\begin{equation}
\begin{aligned}
&\left|\int_{\mathbb{R}^N}(1 + a(x))(I_{\alpha}\ast |u_n|^p)|u_n|^{p - 2}u_nhdx - \int_{\mathbb{R}^N}(1 + a(x))(I_{\alpha}\ast |\bar{u}|^p)|\bar{u}|^{p - 2}\bar{u}hdx\right|\\
&\leq \left|\int_{\Omega}(1 + a(x))(I_{\alpha}\ast |u_n|^p)|u_n|^{p - 2}u_nhdx - \int_{\Omega}(1 + a(x))(I_{\alpha}\ast |u_n|^p)|\bar{u}|^{p - 2}\bar{u}hdx\right|\\
&\quad +   \left|\int_{\Omega}(1 + a(x))(I_{\alpha}\ast |u_n|^p)|\bar{u}|^{p - 2}\bar{u}hdx - \int_{\Omega}(1 + a(x))(I_{\alpha}\ast |\bar{u}|^p)|\bar{u}|^{p - 2}\bar{u}hdx\right|\\
&= \left|\int_{\Omega}(1 + a(x))(I_{\alpha}\ast |u_n|^p)(|u_n|^{p - 2}u_n - |\bar{u}|^{p - 2}\bar{u})hdx\right|\\
&\quad + \left|\int_{\Omega}(1 + a(x))(I_{\alpha}\ast |u_n|^p - I_{\alpha}\ast a|\bar{u}|^p)|\bar{u}|^{p - 2}\bar{u}hdx\right|\\
&\leq C\left|\int_{\Omega}(I_{\alpha}\ast |u_n|^p)(|u_n|^{p - 2}u_n - |\bar{u}|^{p - 2}\bar{u})hdx\right| + C\left|\int_{\Omega}\left(I_{\alpha}\ast (|u_n|^p - |\bar{u}|^p)\right)|\bar{u}|^{p - 2}\bar{u}hdx\right|\\
&\leq C\left|I_{\alpha}\ast |u_n|^p\right|_{L^{\frac{2N}{N - \alpha}}(\mathbb{R}^N)}\left||u_n|^{p - 2}u_n - |\bar{u}|^{p - 2}\bar{u}\right|_{L^{\frac{2Np}{(p - 1)(N + \alpha)}}(\Omega)}|h|_{L^{\frac{2Np}{N + \alpha}}(\mathbb{R}^N)}\\
&+ C\left|I_{\alpha}\ast (|u_n|^p - |\bar{u}|^p)\right|_{L^{\frac{2N}{N - \alpha}}(\Omega)}|\bar{u}|^{p - 1}_{L^{\frac{2Np}{N + \alpha}}(\mathbb{R}^N)}|h|_{L^{\frac{2Np}{N + \alpha}}(\mathbb{R}^N)}\\
&\leq C\|u_n\|^p\left||u_n|^{p - 2}u_n - |\bar{u}|^{p - 2}\bar{u}\right|_{L^{\frac{2Np}{(p - 1)(N + \alpha)}}(\Omega)}\|h\| + C\left||u_n|^p - |\bar{u}|^p\right|_{L^{\frac{2N}{N + \alpha}}(\Omega)}\|\bar{u}\|^{p - 1}\|h\|.
\end{aligned}
\end{equation}
Since $H^s(\mathbb{R}^N)\hookrightarrow L_{loc}^{\frac{2Np}{N + \alpha}}(\mathbb R^N)$ compactly, we have $|u_n - \bar{u}|_{L^{\frac{2Np}{N + \alpha}}(\Omega)}\to 0$. By Lemma 1.20 in \cite{zouwenmingschechter}, $\left||u_n|^{p - 2}u_n - |\bar{u}|^{p - 2}\bar{u}\right|_{L^{\frac{2Np}{(p - 1)(N + \alpha)}}(\Omega)}\to 0$ and $||u_n|^p - |\bar{u}|^p |_{L^{\frac{2N}{N + \alpha}}(\Omega)}\to 0$. So $E'_{\alpha, p}(\bar{u}) = 0$.

If $u_n \to \bar{u}$ in $H^s(\mathbb{R}^N)$, we are done. So we can assume that $\{u_n\}$ does not converge strongly to $\bar{u}$ in $H^s(\mathbb{R}^N)$. Set
$$
z_n^1 = u_n(x) - \bar{u}(x).
$$

Obviously, we have $z_n^1\rightharpoonup 0$ in $H^s(\mathbb{R}^N)$, but not strongly. Then by direct computation we have
\begin{equation}\label{333E:B5}
\|u_n\|^2=\|z_n^1 + \bar{u}\|^2=\|z_n^1\|^2+\|\bar{u}\|^2+o(1).
\end{equation}

Now, we claim that
\begin{equation}\label{333e:estimate4}
\begin{aligned}
\displaystyle\int_{\mathbb{R}^N}a(x)(I_{\alpha}\ast |u_n|^p)(x)|u_n(x)|^{p - 2}u_n(x) & h(x)dx\\ &= \int_{\mathbb{R}^N}a(x)(I_{\alpha}\ast |\bar{u}|^p)(x)|\bar{u}(x)|^{p - 2}\bar{u}(x) h(x)dx + o(1),
\end{aligned}
\end{equation}
\begin{equation}\label{333e:estimate5}
\displaystyle\int_{\mathbb{R}^N}a(x)(I_{\alpha}\ast |u_n|^p)(x)|u_n(x)|^pdx = \int_{\mathbb{R}^N}a(x)(I_{\alpha}\ast |\bar{u}|^p)(x)|\bar{u}(x)|^pdx + o(1),
\end{equation}
\begin{equation}\label{333e:estimate7}
\begin{aligned}
\displaystyle\int_{\mathbb{R}^N}(I_{\alpha}\ast |u_n|^p)(x)&|u_n(x)|^{p - 2}u_n(x) h(x)dx
= \int_{\mathbb{R}^N}(I_{\alpha}\ast |\bar{u}|^p)(x)|\bar{u}(x)|^{p - 2}\bar{u}(x) h(x)dx\\
&+ \int_{\mathbb{R}^N}(I_{\alpha}\ast |z_n^1|^p)(x)|z_n^1(x)|^{p - 2}z_n^1(x) h(x)dx + o(1).
\end{aligned}
\end{equation}

Let us observe that in view of the Sobolev embedding theorems, $u_n\rightharpoonup \bar{u}$ in $H^s(\mathbb{R}^N)$ implies
\begin{equation*}
u_n\rightharpoonup \bar{u}\quad\text{in }L^{\frac{2N}{N - 2s}}(\mathbb{R}^N),\quad |u_n|^p\to |\bar{u}|^p\quad\text{in }L^{\frac{2N}{p(N - 2s)}}_{loc}(\mathbb{R}^N),
\end{equation*}
\begin{equation*}
\phi_{u_n}\rightharpoonup\phi_{\bar{u}}\quad\text{in } \dot{H}^{\frac{\alpha}{2}}(\mathbb{R}^N), \quad\phi_{u_n}\to\phi_{\bar{u}}\quad\text{in } L^{\frac{2N}{N - \alpha}}_{loc}(\mathbb{R}^N).
\end{equation*}

{\it Proof of} (\ref{333e:estimate4}). We estimate
\begin{equation*}
\begin{aligned}
&\left|\displaystyle\int_{\mathbb{R}^N}a(x)(I_{\alpha}\ast |u_n|^p)(x)|u_n(x)|^{p - 2}u_n(x) h(x)dx - \int_{\mathbb{R}^N}a(x)(I_{\alpha}\ast |\bar{u}|^p)(x)|\bar{u}(x)|^{p - 2}\bar{u}(x) h(x)dx\right|\\
&\leq \left|\displaystyle\int_{\mathbb{R}^N}a(x)\left((I_{\alpha}\ast |u_n|^p)(x) - (I_{\alpha}\ast |\bar{u}|^p)(x)\right)|u_n(x)|^{p - 2}u_n(x) h(x)dx\right|\\
& + \left|\int_{\mathbb{R}^N}a(x)(I_{\alpha}\ast |\bar{u}|^p)(x)\left(|u_n(x)|^{p - 2}u_n(x)- |\bar{u}(x)|^{p - 2}\bar{u}(x)\right)h(x)dx\right|.
\end{aligned}
\end{equation*}
Since
\begin{equation*}
\begin{aligned}
&\left|\int_{\mathbb{R}^N}a(x)(I_{\alpha}\ast |\bar{u}|^p)(x)\left(|u_n(x)|^{p - 2}u_n(x)- |\bar{u}(x)|^{p - 2}\bar{u}(x)\right)h(x)dx\right|\\
&\leq C\left|a I_{\alpha}\ast |\bar{u}|^p\right|_{L^{\frac{2N}{N - \alpha}}(\mathbb{R}^N)}\left||u_n|^{p - 2}u_n - |\bar{u}|^{p - 2}\bar{u}\right|_{L^{\frac{2Np}{(p - 1)(N + \alpha)}}(B_\rho(0))}|h|_{L^{\frac{2Np}{N + \alpha}}(\mathbb{R}^N)}\\
&+ C\left|a I_{\alpha}\ast |\bar{u}|^p\right|_{L^{\frac{2N}{N - \alpha}}(\mathbb{R}^N\setminus B_\rho(0))}\left||u_n|^{p - 2}u_n - |\bar{u}|^{p - 2}\bar{u}\right|_{L^{\frac{2Np}{(p - 1)(N + \alpha)}}(\mathbb{R}^N)}|h|_{L^{\frac{2Np}{N + \alpha}}(\mathbb{R}^N)}\\
&\leq C\left|aI_{\alpha}\ast |\bar{u}|^p\right|_{L^{\frac{2N}{N - \alpha}}(\mathbb{R}^N)}\left||u_n|^{p - 2}u_n - |\bar{u}|^{p - 2}\bar{u}\right|_{L^{\frac{2Np}{(p - 1)(N + \alpha)}}(B_\rho(0))}|h|_{L^{\frac{2Np}{N + \alpha}}(\mathbb{R}^N)}\\
&+ C|a|_{L^{\frac{2N}{N - Np + 2sp + \alpha}}(\mathbb{R}^N\setminus B_\rho(0))}\|\bar{u}\|^p\left||u_n|^{p - 2}u_n - |\bar{u}|^{p - 2}\bar{u}\right|_{L^{\frac{2Np}{(p - 1)(N + \alpha)}}(\mathbb{R}^N)}|h|_{L^{\frac{2Np}{N + \alpha}}(\mathbb{R}^N)}\\
&\leq C\varepsilon,
\end{aligned}
\end{equation*}
we have the following estimate,
\begin{equation*}
\begin{aligned}
&\left|\displaystyle\int_{\mathbb{R}^N}a(x)(I_{\alpha}\ast |u_n|^p)(x)|u_n(x)|^{p - 2}u_n(x) h(x)dx - \int_{\mathbb{R}^N}a(x)(I_{\alpha}\ast |\bar{u}|^p)(x)|\bar{u}(x)|^{p - 2}\bar{u}(x) h(x)dx\right|\\
&\leq \left|\displaystyle\int_{\mathbb{R}^N}a(x)\left((I_{\alpha}\ast |u_n|^p)(x) - (I_{\alpha}\ast |\bar{u}|^p)(x)\right)|u_n(x)|^{p - 2}u_n(x) h(x)dx\right|+ C\varepsilon\\
&\leq C|a|_{L^{\frac{2N}{N - Np + 2sp + \alpha}}(\mathbb{R}^N\setminus B_\rho(0))}||u_n|^p - |\bar{u}|^p|_{L^{\frac{2N}{p(N - 2s)}}(\mathbb{R}^N)}|u_n|^{p - 1}_{L^{\frac{2Np}{N + \alpha}}(\mathbb{R}^N)}|h|_{L^{\frac{2Np}{N + \alpha}}(\mathbb{R}^N)}\\
& + C|a|_{L^{\frac{2N}{N - Np + 2sp + \alpha}}(\mathbb{R}^N)}||u_n|^p - |\bar{u}|^p|_{L^{\frac{2N}{p(N - 2s)}}(B_\rho(0))}|u_n|^{p - 1}_{L^{\frac{2Np}{N + \alpha}}(\mathbb{R}^N)}|h|_{L^{\frac{2Np}{N + \alpha}}(\mathbb{R}^N)} + C\varepsilon
\leq  C\varepsilon.
\end{aligned}
\end{equation*}
Then we have (\ref{333e:estimate4}).

{\it Proof of }(\ref{333e:estimate5}). We estimate
\begin{equation*}
\begin{aligned}
&\left|\displaystyle\int_{\mathbb{R}^N}a(x)(I_{\alpha}\ast |u_n|^p)(x)|u_n(x)|^pdx - \int_{\mathbb{R}^N}a(x)(I_{\alpha}\ast |\bar{u}|^p)(x)|\bar{u}(x)|^pdx \right|\\
&\leq \left|\displaystyle\int_{\mathbb{R}^N}a(x)(I_{\alpha}\ast|u_n|^p)(x)\left(|u_n(x)|^p - |\bar{u}(x)|^p\right)dx\right|\\
&+ \left|\int_{\mathbb{R}^N}a(x)\left((I_{\alpha}\ast |u_n|^p)(x) - (I_{\alpha}\ast |\bar{u}|^p)(x)\right)|\bar{u}(x)|^pdx \right|.
\end{aligned}
\end{equation*}
The first term satisfies
\begin{equation*}
\begin{aligned}
&\left|\displaystyle\int_{\mathbb{R}^N}a(x)(I_{\alpha}\ast|u_n|^p)(x)\left(|u_n(x)|^p - |\bar{u}(x)|^p\right)dx\right|\\
&\leq C|a|_{L^{\frac{2N}{N - Np + 2sp + \alpha}}(\mathbb{R}^N\setminus B_\rho(0))}||u_n|^p - |\bar{u}|^p|_{L^{\frac{2N}{p(N - 2s)}}(\mathbb{R}^N)}\|u_n\|^p\\
& + C|a|_{L^{\frac{2N}{N - Np + 2sp + \alpha}}(\mathbb{R}^N)}||u_n|^p - |\bar{u}|^p|_{L^{\frac{2N}{p(N - 2s)}}(B_\rho(0))}\|u_n\|^p\\
&\leq C\varepsilon.
\end{aligned}
\end{equation*}
The second term satisfies
\begin{equation*}
\begin{aligned}
&\left|\int_{\mathbb{R}^N}a(x)\left((I_{\alpha}\ast |u_n|^p)(x) - (I_{\alpha}\ast |\bar{u}|^p(x))\right)|\bar{u}(x)|^pdx \right|\\
&\leq C|a|_{L^{\frac{2N}{N - Np + 2sp + \alpha}}(\mathbb{R}^N\setminus B_\rho(0))}|\bar{u}|^p_{L^{\frac{2N}{N - 2s}}(\mathbb{R}^N)}|(I_{\alpha}\ast |u_n|^p) - (I_{\alpha}\ast |\bar{u})|^p|_{L^{\frac{2N}{N - \alpha}}(\mathbb{R}^N)}\\
& + C|a|_{L^{\frac{2N}{N - Np + 2sp + \alpha}}(\mathbb{R}^N)}|\bar{u}|^p_{L^{\frac{2N}{N - 2s}}(\mathbb{R}^N)}|(I_{\alpha}\ast |u_n|^p) - (I_{\alpha}\ast |\bar{u})|^p|_{L^{\frac{2N}{N - \alpha}}(B_\rho(0))}\\
&\leq C|a|_{L^{\frac{2N}{N - Np + 2sp + \alpha}}(\mathbb{R}^N\setminus B_\rho(0))}|\bar{u}|^p_{L^{\frac{2N}{N - 2s}}(\mathbb{R}^N)}|(I_{\alpha}\ast |u_n|^p) - (I_{\alpha}\ast |\bar{u})|^p|_{L^{\frac{2N}{N - \alpha}}(\mathbb{R}^N)}\\
& + C|a|_{L^{\frac{2N}{N - Np + 2sp + \alpha}}(\mathbb{R}^N)}|\bar{u}|^p_{L^{\frac{2N}{N - 2s}}(\mathbb{R}^N)}||u_n|^p - |\bar{u}|^p|_{L^{\frac{2N}{N + \alpha}}(B_\rho(0))}\\
&\leq C\varepsilon.
\end{aligned}
\end{equation*}
Thus we have (\ref{333e:estimate5}).

{\it Proof of }(\ref{333e:estimate7}). Indeed, we have
\begin{equation*}
\begin{aligned}
&\displaystyle\int_{\mathbb{R}^N}(I_{\alpha}\ast |u_n|^p)(x)|u_n(x)|^{p - 2}u_n(x) h(x)dx\\ &-
\int_{\mathbb{R}^N}(I_{\alpha}\ast |u_n - \bar{u}|^p)(x)|u_n(x) - \bar{u}(x)|^{p - 2}(u_n - \bar{u})(x) h(x)dx\\
&= \displaystyle\int_{\mathbb{R}^N}(I_{\alpha}\ast (|u_n|^p - |u_n - \bar{u}|^p))(x)\Big(|u_n(x)|^{p - 2}u_n(x)dx\\
&- \displaystyle\int_{\mathbb{R}^N}|u_n(x) - \bar{u}(x)|^{p - 2}(u_n - \bar{u})(x)\Big) h(x)dx\\
&+\displaystyle\int_{\mathbb{R}^N}(I_{\alpha}\ast |u_n|^p)(x)|u_n(x) - \bar{u}(x)|^{p - 2}(u_n - \bar{u})(x) h(x)dx
\end{aligned}
\end{equation*}
\begin{equation*}
\begin{aligned}
&+ \displaystyle\int_{\mathbb{R}^N}(I_{\alpha}\ast |u_n - \bar{u}|^p)(x)|u_n(x)|^{p - 2}u_n(x) h(x)dx \\
& - 2\displaystyle\int_{\mathbb{R}^N}(I_{\alpha}\ast|u_n - \bar{u}|^p)(x)|u_n(x) - \bar{u}(x)|^{p - 2}(u_n - \bar{u})(x) h(x)dx.
\end{aligned}
\end{equation*}
From the proof of Lemma 4.3 in \cite{AveniaSicilianoSquassinaMMMAS2015}, we have $I_{\alpha}\ast (|u_n|^p - |u_n - \bar{u}|^p)\to I_{\alpha}\ast |u|^p$ in $L^{\frac{2N}{N - \alpha}}(\mathbb{R}^N)$ and $|u_n - \bar{u}|^p\to 0$ weakly in $L^{\frac{2N}{N + \alpha}}(\mathbb{R}^N)$ as $n\to\infty$. So we have (\ref{333e:estimate7}).

Combining the above estimates and Lemma 4.3 in \cite{AveniaSicilianoSquassinaMMMAS2015}, we obtain
\begin{equation*}
\begin{aligned}
E_{\alpha, p}(u_n) &= \frac{1}{2}\displaystyle\int_{\mathbb{R}^N}|\nabla u_n|^2 + |u_n|^2dx - \frac{1}{2p}\int_{\mathbb{R}^N}(1 + a(x))(I_{\alpha}\ast |u_n|^p)|u_n|^pdx\\
& = \frac{1}{2}\displaystyle\int_{\mathbb{R}^N}|\nabla u_n|^2 + |u_n|^2dx - \frac{1}{2p}\int_{\mathbb{R}^N}a(x)(I_{\alpha}\ast |u_n|^p)(x)|u_n(x)|^pdx \\& - \frac{1}{2p}\int_{\mathbb{R}^N}(I_{\alpha}\ast |u_n|^p)(x)|u_n(x)|^pdx\\
& = \frac{1}{2}\displaystyle\int_{\mathbb{R}^N}|\nabla u_n|^2 + |u_n|^2dx - \frac{1}{2p}\int_{\mathbb{R}^N}a(x)(I_{\alpha}\ast |\bar{u}|^p)(x)|\bar{u}(x)|^pdx \\ & - \frac{1}{2p}\int_{\mathbb{R}^N}(I_{\alpha}\ast |\bar{u}|^p)(x)|\bar{u}(x)|^pdx -\frac{1}{2p}\int_{\mathbb{R}^N}(I_{\alpha}\ast |z_n^1|^p)(x)|z_n^1(x)|^pdx + o(1)\\
&=\frac{1}{2}\|z_n^1\|^2+\frac{1}{2}\|\bar{u}\|^2 -\frac{1}{2p}\int_{\mathbb{R}^N}(1 + a(x))(I_{\alpha}\ast |\bar{u}|^p)(x)|\bar{u}(x)|^pdx\\ &-\frac{1}{2p}\int_{\mathbb{R}^N}(I_{\alpha}\ast |z_n^1|^p)(x)|z_n^1(x)|^pdx + o(1)\\
&=E_{\alpha, p}(\bar{u})+E_{\alpha, p}^\infty(z_n^1)+o(1),
\end{aligned}
\end{equation*}
and for all $h\in H^s(\mathbb{R}^N)$,
\begin{eqnarray*}\label{333E:pss1}
o(1)&=&(E'_{\alpha, p}(u_n), h)\\
&=& \langle u_n, h\rangle- \int_{\mathbb{R}^N}(1 + a(x))(I_{\alpha}\ast |u_n|^p)(x)|u_n(x)|^{p - 2}u_n(x) hdx\\
&=& \langle u_n, h\rangle- \int_{\mathbb{R}^N}a(x)(I_{\alpha}\ast |u_n|^p)(x)|u_n(x)|^{p - 2}u_n(x) h(x)dx\\
&&-\displaystyle\int_{\mathbb{R}^N}(I_{\alpha}\ast |u_n|^p)(x)|u_n(x)|^{p - 2}u_n(x) h(x)dx.
\end{eqnarray*}
Thus
\begin{eqnarray*}\label{333E:pss1}
o(1)&=&(E'_{\alpha, p}(u_n), h)\\
&=& \langle \bar{u}, h\rangle + \langle z_n^1, h\rangle- \int_{\mathbb{R}^N}a(x)(I_{\alpha}\ast |\bar{u}|^p)(x)|\bar{u}(x)|^{p - 2}\bar{u}(x) h(x)dx
\\&& - \int_{\mathbb{R}^N}(I_{\alpha}\ast |\bar{u}|^p)(x)|\bar{u}(x)|^{p - 2}\bar{u}(x) h(x)dx
\end{eqnarray*}
\begin{eqnarray*}
&& - \int_{\mathbb{R}^N}(I_{\alpha}\ast |z_n^1|^p)(x)|z_n^1(x)|^{p - 2}z_n^1(x)h dx + o(1)\\
&=& \langle \bar{u}, h\rangle + \langle z_n^1, h\rangle- \int_{\mathbb{R}^N}(1 + a(x))(I_{\alpha}\ast |\bar{u}|^p)(x)|\bar{u}(x)|^{p - 2}\bar{u}(x) h(x)dx\\
&& - \int_{\mathbb{R}^N}(I_{\alpha}\ast |z_n^1|^p)(x)|z_n^1(x)|^{p - 2}z_n^1(x)h(x)dx + o(1)\\
& = & \langle E'_{\alpha, p}(\bar{u}), h\rangle + \langle {E_{\alpha, p}^{\infty}}'(z_n^1), h\rangle + o(1)\\
& = & \langle {E_{\alpha, p}^{\infty}}'(z_n^1), h\rangle + o(1).
\end{eqnarray*}

Hence
\begin{equation}\label{333E:pss}
{E_{\alpha, p}^{\infty}}'(z_n^1) = o(1)\quad \mbox{in }\in H^s(\mathbb{R}^N).
\end{equation}
Furthermore,
\begin{eqnarray*}
0 &=& \langle E'_{\alpha, p}(u_n), u_n\rangle = \langle E'_{\alpha, p}(\bar{u}), \bar{u}\rangle + \langle {E_{\alpha, p}^{\infty}}'(z_n^1), z_n^1\rangle + o(1)\\
&=& \langle {E_{\alpha, p}^{\infty}}'(z_n^1), z_n^1\rangle + o(1).
\end{eqnarray*}

Setting
$$\delta:=\lim_{n\to +\infty}\sup\left(\sup_{y\in\mathbb R^N}\int_{B_1(y)}|z_n^1|^{\frac{2Np}{N + \alpha}}dx\right),$$
we have that $\delta>0$. Otherwise, if $\delta=0$, then by \cite[Lemma 1.21]{Wi:MT}, $z_n^1\to 0$ in $L^{\frac{2Np}{N + \alpha}}(\mathbb R^N)$. This is a contradiction to the fact that $u_n$ does not converge strongly to $\bar{u}$ in $L^{\frac{2Np}{N + \alpha}}(\mathbb R^N)$.

Then we may assume there exists a sequence of $\{y_n^1\}\subset \mathbb R^N$ such that
$$\int_{B_1(y_n^1)}|z_n^1|^{\frac{2Np}{N + \alpha}}dx>\delta/2.$$
Now we consider $z_n^1(\cdot + y_n^1)$. We assume $z_n^1(\cdot + y_n^1)\rightharpoonup u^1$ in $H^s(\mathbb{R}^N)$. Therefore, $z_n^1(\cdot + y_n^1)\to u^1$ a.e. on $\mathbb R^N$. Since
$$\int_{B_1(0)}|z_n^1(x + y_n^1)|^{\frac{2Np}{N + \alpha}}dx>\delta/2,$$
from the Rellich theorem it follows that
$$\int_{B_1(0)}|u^1(x)|^{\frac{2Np}{N + \alpha}}dx > \delta/2.$$
Thus, $u^1\ne 0$. Since $z_n^1\rightharpoonup 0$ in $W^{1,2}(\mathbb{R}^N)$, $(y_n^1)$ must be unbounded, and up to a subsequence, we can assume that $|y_n^1|\to +\infty$. Furthermore, (\ref{333E:pss}) implies ${E_{\alpha, p}^{\infty}}'(u^1) = 0$. Finally, let us set
$$
z_n^2(x) = z_n^1(x) - u^1(x - y^1_n).
$$

Then, we have
$$
\|z_n^2\|^2 = \|u_n\|^2  - \|u^1\|^2 -\|\bar{u}\|^2+o(1)
$$
and
\begin{eqnarray*}
\int_{\mathbb{R}^N}(I_{\alpha}\ast |z_n^2|^p)(x)|z_n^2(x)|^pdx &=& \displaystyle\int_{\mathbb{R}^N}(I_{\alpha}\ast |u_n|^p)(x)|u_n(x)|^pdx
-\int_{\mathbb{R}^N}(I_{\alpha}\ast |\bar{u}|^p)(x)|\bar{u}(x)|^pdx\\ &&- \int_{\mathbb{R}^N}(I_{\alpha}\ast |u^1|^p)(x)|u^1(x)|^pdx + o(1).
\end{eqnarray*}
This implies
$$
E_{\alpha, p}^\infty(z_n^2) = E_{\alpha, p}^\infty(z_n^1) - E_{\alpha, p}^\infty(u^1) + o(1),
$$
and hence we obtain
\begin{eqnarray*}
E_{\alpha, p}(u_n) &=&E_{\alpha, p}(\bar{u})+ E_{\alpha, p}^\infty(z_n^2) + E_{\alpha, p}^\infty(u^1)+o(1).
\end{eqnarray*}
As before, one can prove that
\begin{equation*}
{E_{\alpha, p}^{\infty}}'(z_n^2) = o(1)\quad \mbox{in } H^s(\mathbb{R}^N).
\end{equation*}

Now, if $z_n^2\to 0$ in $H^s(\mathbb{R}^N)$, then we are done. Otherwise, $z_n^2\rightharpoonup 0$ and not strongly and we repeat the above argument. Then we obtain a sequence of points $\{y_n^j\} \subseteq \mathbb{R}^N$ such that $|y_n^j|\to\infty$, $|y_n^j - y_n^i|\to+\infty$ if $i\neq j$ as $n\to+\infty$ and a sequence of functions $z_n^j(x) = z_n^{j - 1} - u^{j - 1}(x - y_n^{j - 1})$ with $j\geq 2$ such that
$$
z_n^j(x + y_n^j)\rightharpoonup u^j(x)\quad\text{ in }H^s(\mathbb{R}^N),\quad {E_{\alpha, p}^{\infty}}'(u^j) = 0
$$
and
$$
E_{\alpha, p}(u_n) = E_{\alpha, p}(\bar{u}) + \sum_{j = 1}^{k-1} E_{\alpha, p}^\infty(u^j) + E_{\alpha, p}^\infty(z_n^k) + o(1).
$$

Then, since $E_{\alpha,p}^\infty(u^j)\geq m_\infty$ for all $j$ and $E_{\alpha, p}(u_n)$ is bounded, the iteration must stop at some finite index $k$.
Thus we have completed the proof of Theorem \ref{333L:compact}.
\end{proof}

\begin{lemma}\label{333l:compactnesscorrollary}
Let $\{u_n\}$ be a $(PS)_d$ sequence. Then $\{u_n\}$ is relatively compact for all $d\in (0, m_\infty)$. Moreover, if $E_{\alpha, p}(u_n)\to m_\infty$, then either $\{u_n\}$ is relatively compact or the statement of Thoerem \ref{333L:compact} holds with $k = 1$, and $u^1 = w$, the ground state solution of (\ref{333e:infinity}).
\end{lemma}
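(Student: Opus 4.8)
The plan is to feed the sequence $\{u_n\}$ into Theorem~\ref{333L:compact} and then read off the conclusion from an elementary comparison of energy levels. So first I would apply Theorem~\ref{333L:compact}: up to a subsequence we obtain a solution $\bar u$ of (\ref{333e:main}), an integer $k\ge 0$, nontrivial weak solutions $u^1,\dots,u^k$ of (\ref{333e:infinity}) and sequences $\{y_n^i\}$ with $|y_n^i|\to\infty$, such that
\begin{equation*}
E_{\alpha,p}(u_n)\to \sum_{i=1}^k E^\infty_{\alpha,p}(u^i)+E_{\alpha,p}(\bar u),\qquad u_n-\sum_{i=1}^k u^i(\cdot-y_n^i)\to\bar u\ \text{ in }H^s(\mathbb R^N).
\end{equation*}
The two inputs I would then use are: (a) every nontrivial weak solution $v$ of (\ref{333e:infinity}) lies on $\mathcal N_\infty$, so by the $a\equiv0$ analogue of (\ref{333e:Nehari3}) one has $E^\infty_{\alpha,p}(v)=(\tfrac12-\tfrac1{2p})\|v\|_s^2\ge m_\infty$, where $m_\infty>0$; and (b) $\bar u$ is a weak solution of (\ref{333e:main}), hence either $\bar u=0$ and $E_{\alpha,p}(\bar u)=0$, or $\bar u\in\mathcal N$ and then $E_{\alpha,p}(\bar u)=(\tfrac12-\tfrac1{2p})\|\bar u\|_s^2>0$; in particular $E_{\alpha,p}(\bar u)\ge 0$ with equality if and only if $\bar u=0$.

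For the first assertion, suppose $d\in(0,m_\infty)$. Since $E_{\alpha,p}(u_n)\to d$, the decomposition together with (a) and (b) gives $d\ge k\,m_\infty$, which forces $k=0$ because $d<m_\infty$. With $k=0$ the second displayed convergence reads $u_n\to\bar u$ in $H^s(\mathbb R^N)$, so $\{u_n\}$ is relatively compact (and $\bar u\ne 0$ since $E_{\alpha,p}(\bar u)=d>0$).

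For the ``moreover'' part, assume $E_{\alpha,p}(u_n)\to m_\infty$, so $m_\infty=\sum_{i=1}^k E^\infty_{\alpha,p}(u^i)+E_{\alpha,p}(\bar u)$. If $k=0$, then again $u_n\to\bar u$ in $H^s(\mathbb R^N)$ and $\{u_n\}$ is relatively compact. If $k\ge 1$, then by (a) we get $m_\infty\ge k\,m_\infty+E_{\alpha,p}(\bar u)\ge m_\infty+E_{\alpha,p}(\bar u)$, hence $E_{\alpha,p}(\bar u)\le 0$; combined with (b) this yields $E_{\alpha,p}(\bar u)=0$ and $\bar u=0$. Then $m_\infty=\sum_{i=1}^k E^\infty_{\alpha,p}(u^i)\ge k\,m_\infty$ forces $k=1$ and $E^\infty_{\alpha,p}(u^1)=m_\infty$; since $u^1$ is a nontrivial solution of (\ref{333e:infinity}) on $\mathcal N_\infty$ realizing $m_\infty=\inf_{\mathcal N_\infty}E^\infty_{\alpha,p}$, it is a ground state solution $w$ of (\ref{333e:infinity}), and the statement of Theorem~\ref{333L:compact} holds with $k=1$, $\bar u=0$ and $u^1=w$.

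I do not expect a serious obstacle here: once Theorem~\ref{333L:compact} is in hand the argument is pure bookkeeping with energy levels. The few points deserving a word of care are: the positivity $E_{\alpha,p}(\bar u)\ge 0$ (which rests on the Nehari identity (\ref{333e:Nehari3}) and on $p\ge 2$, via Lemma~\ref{333l:nehari}); the fact that $m_\infty>0$, which is exactly what makes the inequalities $d\ge k\,m_\infty$ and $m_\infty\ge k\,m_\infty$ decisive; and noting that in the branch $k=1$, $\bar u=0$ the sequence genuinely fails to be relatively compact, since each profile $u^i(\cdot-y_n^i)$ with $|y_n^i|\to\infty$ converges weakly but not strongly to $0$, so $u_n$ does not converge strongly to $\bar u=0$.
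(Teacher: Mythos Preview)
Your proposal is correct and follows essentially the same approach as the paper: apply Theorem~\ref{333L:compact} and use the energy decomposition together with $E^\infty_{\alpha,p}(u^i)\ge m_\infty$ and $E_{\alpha,p}(\bar u)\ge 0$ to force $k=0$ when $d<m_\infty$, and $k=1$, $\bar u=0$, $u^1=w$ when $d=m_\infty$ and compactness fails. Your write-up is in fact more explicit than the paper's (which states the conclusions tersely without spelling out the inequality $d\ge k\,m_\infty+E_{\alpha,p}(\bar u)$ or the reason $E_{\alpha,p}(\bar u)\ge 0$).
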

\begin{proof}
Let us consider a $(PS)_d$ sequence $\{u_n\}$ and apply Theorem \ref{333L:compact}.  Specially, note that $E_{\alpha, p}^\infty(u^j) \geq m_\infty$, for all $j$.

When $E_{\alpha, p}(u_n) \to d < m_\infty$, then $k = 0$, and then $u_n\to \bar{u}$ in $H^s(\mathbb{R}^N)$. When $E_{\alpha, p}(u_n)\to m_\infty$, if $\{u_n\}$ is not compact, then $k = 1$, and $\bar{u} = 0$, $u^1 = w$.
\end{proof}

\section{Proof of Theorem \ref{333t:theorem1}}
\begin{proof}
To prove the existence of a ground state solution of (\ref{333e:main}), we just need to show that
\begin{equation}\label{333e:condition}
m < m_\infty.
\end{equation}
If this is the case, using Lemma \ref{333l:compactnesscorrollary} and standard arguments, it is easy to see that $m$ is achieved by a function $u$ which solves (\ref{333e:main}). Furthermore, $u$ is positive. Indeed, let $\{u_n\}\subseteq \mathcal{N}$ be a minimizing sequence, $E_{\alpha, p}(u_n)\to m$. By Theorem 6.17 of \cite{LiebLoss2001}, we have $\||u_n|\|^2 \leq \|u_n\|^2 = \int_{\mathbb{R}^N}(1 + a(x))(I_{\alpha}\ast |u_n|^p)|u_n|^pdx$. So $t_n|u_n|\in \mathcal{N}$ for some $t_n\in (0, 1]$. Thus by (\ref{333e:Nehari3}), we have $E_{\alpha, p}(t_n|u_n|) = (\frac{1}{2} - \frac{1}{2p})t_n^2\||u_n|\|^2\leq (\frac{1}{2} - \frac{1}{2p})\|u_n\|^2 = E_{\alpha, p}(u_n)$. This shows that $\{t_n|u_n|\}$ is also a minimizing sequence and the minimizer $u \geq 0$. By minor modification of Theorem 3.2 in \cite{AveniaSicilianoSquassinaMMMAS2015}, $u\in C^0(\mathbb{R}^N)$. Finally, from the maximum principle for fractional Laplacian (see \cite{DiNezzaPalatucciValdinoci2012}), we have $u > 0$.

To verify condition (\ref{333e:condition}), we consider the projection $tw$ on $\mathcal N$ of the minimizer $w$ of $E_{\alpha, p}^\infty$ on $\mathcal{N}_\infty$.
First, let us show that $t < 1$. Indeed, if $t\geq 1$ would be true, then we have
\begin{eqnarray*}
0 &=& t^2\|w\|^2 - t^{2p}\int_{\mathbb{R}^N}(1 + a(x))(I_{\alpha}\ast |w|^p)|w|^pdx \\
  & < &t^2\|w\|^2 - t^{2p}\int_{\mathbb{R}^N}(I_{\alpha}\ast |w|^p)|w|^pdx \\
  &=& (t^2 - t^{2p})\|w\|^2 \leq 0,
\end{eqnarray*}
a contradiction. So $t < 1$.

Then we have
\begin{eqnarray*}
m &\leq & E_{\alpha, p}(tw) =  \frac{1}{2}t^2\|w\|^2 - \frac{t^{2p}}{2p}\int_{\mathbb{R}^N}(1 + a(x))(I_{\alpha}\ast |w|^p)|w|^pdx \\
  & < &\frac{1}{2}t^2\|w\|^2 - \frac{t^{2p}}{2p}\int_{\mathbb{R}^N}(I_{\alpha}\ast |w|^p)|w|^pdx\\
  &=& (\frac{t^2}{2} - \frac{t^{2p}}{2p})\|w\|^2\\
  &< & (\frac{1}{2} - \frac{1}{2p})\|w\|^2 = m_\infty,
\end{eqnarray*}
Hence, $m < m_\infty$ and the proof is completed.
\end{proof}

\bibliographystyle{plain}
\end{document}